\newtheorem{theorem}{Theorem}[section]
\newtheorem{lemma}[theorem]{Lemma}
\newtheorem{proposition}[theorem]{Proposition}
\theoremstyle{definition}
\newtheorem{definition}[theorem]{Definition}
\theoremstyle{remark}
\newtheorem{remark}[theorem]{Remark}
\numberwithin{equation}{section}
\newcommand{\R}{\mathbb{R}}
\newcommand{\N}{\mathbb{N}}
\begin{document}
\setcounter{page}{1}

\title[Harmonically m-concave set-valued function]{Harmonically m-concave set-valued function}

\author[Gabriel Santana, Maira Valera-L\'opez, Nelson Merentes]{Gabriel Santana$^1$, Maira Valera-L\'opez$^{1}$, Nelson Merentes$^{1*}$}
\address{$^{1}$ Escuela de Matem\'atica,  Facultad de Ciencias, Universidad Central de Venezuela, Caracas 1010, Venezuela.}
\email{\textcolor[rgb]{0.00,0.00,0.84}{gaszsantana@gmail.com}}
\email{\textcolor[rgb]{0.00,0.00,0.84}{maira.valera@ciens.ucv.ve}}
\email{\textcolor[rgb]{0.00,0.00,0.84}{nmerucv@gmail.com}}

\subjclass[2010]{Primary: 26D15; Secondary: 26D99, 26A51, 39B62, 46N10 .}
\keywords{set-valued m-concave functions, Harmonically m-concave functions, Harmonically m-concave functions, Khun type result, Bernstein-Doetcsh type result, convex analysis}

\date{Received: xxxxxx; Revised: yyyyyy; Accepted: zzzzzz.
\newline \indent $^{*}$ Corresponding author}

\begin{abstract}
	This research aimed to introduce the concept of harmonically m-concave  set-valued functions, which is obtained from the combination of two definitions: harmonically m-concave functions and set-valued functions. In this work some properties and characteristics are developed, as well as a Kuhn type theorem and Bernstein-Doetcsh type result for such functions.
\end{abstract} \maketitle

\section{Introduction}

With the intention to expand the study of harmonic set-valued functions, in this paper we extend the results obtained by G. Santana \textit{et. al.} in \cite{santana2020(2)}\\

\noindent In \cite{santana2020(1)}, harmonically m-convex set-valued functions are defined, in this paper we will introduce the counterpart of harmonically m-convex set-valued functions, as a result we will obtain a Kuhn theorem type and Bernstein-Doetsch type result.

\section{Preliminary}

As part of our research it is necessary to provide the reader with some preliminary definitions used throughout this investigation.

\begin{definition}(see \cite{Noor2016})
	Let $X$ a linear space. A nonempty subset $D$ of $X$ is said to be harmonic convex, if for all $x,y\in D$ and $t\in [0,1]$, we have:
	
	$$\frac{xy}{tx+(1-t)y}\in D.$$ 
	
	and for all $m\in (0,1]$ we say that $D$ is a harmonically $m$-convex set and satisfies
	 	$$\frac{mxy}{tmx+(1-t)y}\in D.$$
\end{definition}

\noindent In \cite{Iscan2014(1)}, \.{I}. \.{I}scan introduced a new variation of convexity called harmonically-convex function. A real function $f: D\rightarrow \R$ is harmonically-concave function if for all $x,y\in D$ and $t\in [0,1]$ then

\begin{equation}\label{HC2}
f\left(\frac{xy}{tx+(1-t)y}\right)\leq tf(y)+(1-t)f(x).
\end{equation}

\noindent If in the inequality (\ref{HC2}) we change ($\leq$) to ($\geq$) then we say that $f$ is {\bf harmonically concave}.\\

\noindent In the same way, M. A. Noor \textit{et. al.} (see \cite{Noor2016}) extends this definition to strongly convexity

\begin{definition}(see \cite{Noor2016})
	A  function $f:D\subset \R\backslash\{0\} \rightarrow \R$ is  said  to  be  strongly  harmonic convex function with modulus $c >0$, if
	\begin{equation*}
	f\left(\frac{xy}{tx+(1-t)y}\right)\leq tf(y)+(1-t)f(x)-ct(1-t)\left\|\frac{x-y}{xy}\right\|^{2}.
	\end{equation*}
	The function $f$ is said to be {\bf strongly harmonic concave function with modulus} $c >0$, if $-f$ is strongly harmonic convex function, that is
	\begin{equation}\label{hconcave}
	f\left(\frac{xy}{tx+(1-t)y}\right)+ct(1-t)\left\|\frac{x-y}{xy}\right\|^{2}\geq tf(y)+(1-t)f(x).
	\end{equation}
	 We say that $f$ is strongly harmonic mid concave with modulus $c$ if (\ref{hconcave}) is assumed only for $t=\frac{1}{2}$, that is
	 \begin{equation}
	 f\left(\frac{2xy}{x+y}\right)+\frac{c}{4}\left\|\frac{x-y}{xy}\right\|^{2}\geq \frac{f(y)+f(x)}{2}.
	 \end{equation}
\end{definition} 

In the same way \.{I}. \.{I}scan in \cite{Iscan2014(2)} extended the definition that we show before like that

\begin{definition}\label{iscan}(see \cite{Iscan2014(2)})
	The function $f : (0, b] \rightarrow \R,  b > 0$, is said to be harmonically $(\alpha, m)$-convex, where $\alpha \in [0, 1]$ and $m \in (0, 1]$, if 
	\begin{equation}\label{a-m-convex}
	f\left(\frac{mxy}{tmx+(1-t)y}\right)\leq t^{\alpha}f(y)+m(1-t^{\alpha})f(x).
	\end{equation}
	if in \ref{a-m-convex} we take $\alpha=1$, we say that $f$ is {\bf harmonically $m$-convex}.
\end{definition}

\begin{remark}
	In the following $\overline{B}$ denote the closed unit ball of $Y$ and $c>0$ a scalar. Also, we consider $X,Y$ linear spaces and we denote $c(Y),cc(Y), n(Y)$ as the convex subset, compact convex and not empty subsets of $Y$ respectively.
\end{remark}

\noindent The definition {\bf \ref*{iscan}} is used by G. Santana \textit{et. al.} in \cite{santana2020(1)} to introduced the definition of harmonically $m$-convex set-valued functions, combinating the notion of harmonically $m$-convex and set-valued functions, as

\begin{definition}(see \cite{santana2020(1)})
		Let $X$ and $Y$ be linear spaces, $D$ a harmonically $m$-convex subset of $X$ and $F:D\subset X\rightarrow n(Y)$ a set-valued function. It said that $F$ is said harmonically $m$-convex function if for all $x,y\in D$, $t\in [0,1]$, and $m\in (0,1]$, we have:
		
		\begin{equation}\label{FCVAmC_3}
		tF(y)+m(1-t)F(x)\subseteq F\left(\frac{mxy}{tmx+(1-t)y}\right).
		\end{equation}
\end{definition}

\noindent For the purposes of this paper it is necessary to mention the results obtained in \cite{santana2020(2)}, which correspond to harmonically concave set-valued function definition, Kuhn type theorem and Bernstein-Doetcsh type result for convex set-valued functions respectively. 

\begin{definition}(see \cite{santana2020(2)})
	Let $ X, Y $ linear spaces, and $ D \subset X $ a harmonically convex set, then we say that $ F: D \rightarrow n(Y) $ is a {\bf harmonically concave} set-valued function if for each $x_ {1}, x_ {2} \in D $ and $ t \in [0,1] $ we have
	
	\begin{equation}\label{hc4}
	F\left(\frac{xy}{tx+(1-t)y}\right)\subset tF(y)+(1-t)F(x)
	\end{equation}
	
	\noindent If we take $t=1/2$ in (\ref{hc4}) we say that $F$ is {\bf harmonically midconcave} set-valued function, that is 	
	
	$$F\left(\frac{2xy}{x+y}\right)\subset \frac{F(y)+F(x)}{2}$$
\end{definition}
 
\noindent In \cite{santana2020(2)} they define the harmonically concave set-valued functions and strongly harmonically concave set-valued functions with modulus $c$, and we obtain new results for this kind of functions as arithmetics operation, Kuhn type theorem and Bernstein-Doetcsh type result.     

\begin{definition}(see \cite{santana2020(2)})
	Let $t\in (0,1)$. A set-valued function $F: D\subset X\rightarrow n(Y)$ is strongly t-concave modulus $c$ if fol all $x,y\in D$ we have
	
	\begin{equation}\label{Concava}
		F\left(\frac{xy}{tx+(1-t)y}\right)+ct(1-t)\left\|\frac{x-y}{xy}\right\|^{2}\overline{B}\subset tF(y)+(1-t)F(x)
	\end{equation}
	
	\noindent If $t\in [0,1]$, then $F$ is strongly harmonically concave modulus $c$ and satisfies (\ref{Concava}).
\end{definition}

\begin{definition}(see \cite{santana2020(2)})
	Let $F$ a set-valued function. If $F$ satisfies (\ref{Concava}) for $t=1/2$, we say that $F$ is strongly harmonically midconcave modulus $c$ 
	
	$$F\left(\frac{2xy}{x+y}\right)+\frac{c}{4}\left\|\frac{x-y}{xy}\right\|^{2}\overline{B}\subset \frac{F(y)+F(x)}{2}$$
\end{definition}

The following theorem represent a Kuhn type result

\begin{theorem}\label{kuhn}(see \cite{santana2020(2)})
	Let $D$ a harmonically convex subset of $X$ and $t\in (0,1)$ a fixed point. If the set-valued function $F:D\rightarrow cc(Y)$ is strongly harmonically t-concave modulus $c$, then $F$ is strongly harmonically midconcave modulus $c$.
\end{theorem}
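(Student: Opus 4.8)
The plan is to reduce the single-parameter hypothesis to the midpoint case by an explicit averaging iteration built on the harmonic structure. Writing $H_s(x,y)=\dfrac{xy}{sx+(1-s)y}$ for $s\in(0,1)$, the identity $\dfrac{1}{H_s(x,y)}=\dfrac{s}{y}+\dfrac{1-s}{x}$ shows that in the reciprocal coordinates $u=1/x$ the harmonic means become ordinary affine means and $\left\|\dfrac{x-y}{xy}\right\|=\left\|\dfrac1x-\dfrac1y\right\|$; thus strong harmonic $t$-concavity of $F$ is exactly strong affine $t$-concavity of $G(u):=F(1/u)$ on the convex set $\{1/x:x\in D\}$, with the \emph{same} modulus $c$. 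First I would record the symmetry $x\leftrightarrow y$ in the defining inclusion, which turns the hypothesis into its companion with weight $1-t$; both
\[
F(H_t(x,y))+ct(1-t)\left\|\tfrac{x-y}{xy}\right\|^2\overline{B}\subset tF(y)+(1-t)F(x)
\]
and the analogous inclusion for $H_{1-t}(x,y)$ are then available for every pair in $D$.

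Next I would iterate. Fix $x,y\in D$, set $x_0=x,\ y_0=y$ and $x_{n+1}=H_t(x_n,y_n),\ y_{n+1}=H_{1-t}(x_n,y_n)$. Since $\tfrac1{x_n}+\tfrac1{y_n}$ is invariant, the harmonic mean $m:=\tfrac{2xy}{x+y}$ is preserved at every stage, while $\left\|\tfrac{x_n-y_n}{x_ny_n}\right\|=|2t-1|^{\,n}\left\|\tfrac{x-y}{xy}\right\|\to0$, so $x_n,y_n\to m$. Adding the two inclusions at stage $n$ yields
\[
F(x_{n+1})+F(y_{n+1})+2ct(1-t)\left\|\tfrac{x_n-y_n}{x_ny_n}\right\|^2\overline{B}\subset F(x_n)+F(y_n),
\]
and telescoping together with the geometric sum $\sum_{k\ge0}|2t-1|^{2k}=\tfrac{1}{4t(1-t)}$ collapses the accumulated radius to exactly $\tfrac{c}{2}\left\|\tfrac{x-y}{xy}\right\|^2$, giving for every $n$
\[
F(x_{n+1})+F(y_{n+1})+R_n\overline{B}\subset F(x)+F(y),\qquad R_n\uparrow\tfrac{c}{2}\left\|\tfrac{x-y}{xy}\right\|^2 .
\]
The exact appearance of $c/2$ here, hence of $c/4$ after halving, is precisely what produces the correct midpoint modulus, and is a reassuring internal check that this iteration is the right device.

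Finally I would let $n\to\infty$. Because the values lie in $cc(Y)$ and $x_n,y_n\to m$, one wants $F(x_{n+1})+F(y_{n+1})\to 2F(m)$ in the Hausdorff metric, which would give $2F(m)+\tfrac{c}{2}\left\|\tfrac{x-y}{xy}\right\|^2\overline{B}\subset F(x)+F(y)$ and, after dividing by $2$ and using $m=\tfrac{2xy}{x+y}$, exactly strong harmonic midconcavity. \emph{This limiting step is where I expect the real difficulty:} passing to the limit on the left-hand side is not automatic without some regularity of $F$, and it is the set-valued analogue of the delicate point in the classical scalar Kuhn theorem. I would dispose of it in one of two ways: either transport the problem through the reciprocal substitution and invoke the affine strongly $t$-concave Kuhn theorem of \cite{santana2020(2)}, so that only the routine equivalence of the two notions is checked here; or argue directly, using the R\aa dstr\"om cancellation law and completeness of $cc(Y)$ under the Hausdorff metric to identify the monotone limit of the compact convex sets $F(x_n)+F(y_n)$ with $2F(m)$. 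Either way the algebraic content is elementary, and the only genuine obstacle is controlling $F$ along the sequence converging to the harmonic mean.
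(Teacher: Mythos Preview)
The paper does not prove this theorem here; it is quoted from \cite{santana2020(2)} as a preliminary result and used later as a black box, so there is no in-paper argument to compare your attempt against.

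On the merits of your sketch: it is the standard Kuhn--Dar\'oczy--P\'ales device and the computations are correct. The reciprocal change of variables $u=1/x$ indeed converts strong harmonic $t$-concavity of $F$ into strong affine $t$-concavity of $G(u)=F(1/u)$ with the same modulus; the iteration $(x_n,y_n)\mapsto (H_t(x_n,y_n),H_{1-t}(x_n,y_n))$ preserves $1/x_n+1/y_n$ and contracts $\left\|\tfrac{x_n-y_n}{x_ny_n}\right\|$ by the factor $|2t-1|$; and the telescoped ball radius sums to exactly $\tfrac{c}{2}\left\|\tfrac{x-y}{xy}\right\|^{2}$, giving the correct midpoint modulus $c/4$ after halving. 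You are also right that the only substantive issue is the limit. Be aware, however, that your option~(b) as written does not close the gap: even granting completeness of $cc(Y)$ and R\aa dstr\"om cancellation, Hausdorff convergence of the nested compact sets $F(x_n)+F(y_n)$ does \emph{not} by itself identify the limit with $2F(m)$ unless $F$ is continuous at $m$, which is not part of the hypothesis. Your option~(a)---transport through $u=1/x$ and invoke the affine Kuhn theorem for strongly $t$-concave $cc(Y)$-valued maps---is the clean resolution; only note that the affine antecedent lives in the earlier set-valued convexity literature (e.g.\ \cite{NikodemK2003}, \cite{Odalis2014}) rather than in \cite{santana2020(2)}, which is already the harmonic version.
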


\noindent and the following is corresponding to Bernstein-Doetsch type result

\begin{lemma}\label{diadical}(see \cite{santana2020(2)})
	If $F:D\rightarrow cc(Y)$ is a strongly harmonically midconcave modulus $c$, then for all $x,y\in D$ and $k,n\in \N$ such that $k<2{n}$ we have
	
	$$F\left(\frac{xy}{\frac{k}{2^{n}}x+\left(1-\frac{k}{2^{n}}\right)y}\right)+c\frac{k}{2^{n}}\left(1-\frac{k}{2^{n}}\right)\left\|\frac{x-y}{xy}\right\|^{2}\overline{B}$$
	$$\subset \frac{k}{2^{n}}F(y)+\left(1-\frac{k}{2^{n}}\right)F(x)$$
\end{lemma}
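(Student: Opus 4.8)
The plan is to proceed by induction on $n$, proving the asserted inclusion for every $k$ with $0\le k\le 2^{n}$ (the cases $k=0$ and $k=2^{n}$ being trivial, since then the argument of $F$ on the left reduces to $x$ or $y$ and the radius vanishes). Throughout write $\lambda=\frac{k}{2^{n}}$, $R=\left\|\frac{x-y}{xy}\right\|^{2}$, and $H_{\lambda}=\frac{xy}{\lambda x+(1-\lambda)y}$, so that the claim reads $F(H_{\lambda})+c\lambda(1-\lambda)R\,\overline{B}\subset\lambda F(y)+(1-\lambda)F(x)$. The base case $n=1$ forces $k=1$, i.e. $\lambda=\frac12$, which is exactly the defining midconcavity inequality for $F$.

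For the inductive step the essential computation is the behaviour of the harmonic mean at the points $H_{\lambda}$. First I would record the algebraic identity that, for $u=H_{\lambda_{1}}$ and $v=H_{\lambda_{2}}$, one has $\frac{1}{u}=\frac{\lambda_{1}}{y}+\frac{1-\lambda_{1}}{x}$ and likewise for $v$; adding these reciprocals and inverting shows the harmonic midpoint is $\frac{2uv}{u+v}=H_{\mu}$ with $\mu=\frac{\lambda_{1}+\lambda_{2}}{2}$, while subtracting the reciprocals gives $\frac{u-v}{uv}=\frac{1}{v}-\frac{1}{u}=(\lambda_{2}-\lambda_{1})\frac{x-y}{xy}$, hence $\left\|\frac{u-v}{uv}\right\|^{2}=(\lambda_{2}-\lambda_{1})^{2}R$. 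Now assume the inclusion holds at level $n$. An index at level $n+1$ of the form $\frac{2k}{2^{n+1}}=\frac{k}{2^{n}}$ is already covered, so only the odd indices $\mu=\frac{2k+1}{2^{n+1}}$ need attention; write $\mu=\frac12\left(\lambda_{1}+\lambda_{2}\right)$ with $\lambda_{1}=\frac{k}{2^{n}}$, $\lambda_{2}=\frac{k+1}{2^{n}}$, both at level $n$.

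Applying midconcavity to $u=H_{\lambda_{1}}$, $v=H_{\lambda_{2}}$ yields $F(H_{\mu})+\frac{c}{4}(\lambda_{2}-\lambda_{1})^{2}R\,\overline{B}\subset\frac{F(u)+F(v)}{2}$. To the two halves of the right-hand side I then apply the inductive hypothesis at $\lambda_{1}$ and $\lambda_{2}$ and add the resulting inclusions. The crucial point for the set arithmetic is that adding a fixed set preserves inclusions and that, because $F(x),F(y)\in cc(Y)$ and $\overline{B}$ are convex and contain the origin, one has $\alpha A+\beta A=(\alpha+\beta)A$ for convex $A$ and $P\overline{B}+Q\overline{B}=(P+Q)\overline{B}$; these let the two copies of $F(x)$ and of $F(y)$ coalesce into $\mu F(y)+(1-\mu)F(x)$ and let the two ball terms merge. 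Chaining the midconcavity inclusion with the summed hypotheses gives $F(H_{\mu})+(P+Q)\overline{B}\subset\mu F(y)+(1-\mu)F(x)$, where $P=\frac{c}{4}(\lambda_{2}-\lambda_{1})^{2}R$ and $Q=\frac{c}{2}\left(\lambda_{1}(1-\lambda_{1})+\lambda_{2}(1-\lambda_{2})\right)R$.

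It remains to verify $P+Q=c\mu(1-\mu)R$, which reduces to the scalar identity $\frac{(\lambda_{2}-\lambda_{1})^{2}}{4}+\frac{\lambda_{1}(1-\lambda_{1})+\lambda_{2}(1-\lambda_{2})}{2}=\frac{(\lambda_{1}+\lambda_{2})(2-\lambda_{1}-\lambda_{2})}{4}$; expanding both sides confirms equality, completing the induction. I expect the main obstacle to be not this coefficient bookkeeping but the set-theoretic step: one must check that the Minkowski sum of the two hypothesis inclusions can be taken with a common element $b\in\overline{B}$, so that $\frac12(a+rb)+\frac12(a'+r'b)$ lands in $\frac12 C+\frac12 C'$, and that the merging of radii and of the convex images $F(x),F(y)$ genuinely requires the compact-convex hypothesis on $F$ rather than merely membership in $n(Y)$.
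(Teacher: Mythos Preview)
Your induction argument is correct and is the standard route to results of this Bernstein--Doetsch dyadic type: the key identities $\frac{2uv}{u+v}=H_{(\lambda_1+\lambda_2)/2}$ and $\left\|\frac{u-v}{uv}\right\|^{2}=(\lambda_2-\lambda_1)^{2}R$ for $u=H_{\lambda_1}$, $v=H_{\lambda_2}$ are exactly what makes the harmonic setting behave like the affine one under the change of variable $x\mapsto 1/x$, and your coefficient check $P+Q=c\mu(1-\mu)R$ is right. The only remark is that your closing caveat about needing a ``common element $b\in\overline{B}$'' is unnecessary: Minkowski addition preserves inclusion termwise, and then $\alpha\overline{B}+\beta\overline{B}=(\alpha+\beta)\overline{B}$ and $\alpha F(x)+\beta F(x)=(\alpha+\beta)F(x)$ follow purely from convexity (with $\alpha,\beta\ge 0$), which is where the hypothesis $F(x)\in cc(Y)$ enters.

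As for comparison with the paper: the present paper does \emph{not} supply a proof of this lemma at all---it is quoted verbatim from \cite{santana2020(2)} as a preliminary result. The paper's own contribution is the $m$-variant (Lemma~\ref{Diadical}), and its proof of that $m$-version is not an induction but a one-line reduction: it invokes the cited lemma as a black box and then applies Proposition~\ref{t-to-m} to pass from the harmonic inclusion to the harmonic $m$-inclusion. So your direct induction is genuinely more self-contained than anything appearing in this paper; it is presumably close to what \cite{santana2020(2)} does, and in any case it stands on its own.
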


\begin{theorem}\label{bernstein}(see \cite{santana2020(2)})
	Let $D$ be a harmonically convex subset. If $ F: D \rightarrow bcl(Y) $ is a strongly harmonically midconcave set-valued function modulus $c$ and semicontinuous superiorly over $D$, then $F$ is strongly harmonically concave modulus $c$.
\end{theorem}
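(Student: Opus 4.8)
The plan is to deduce the full strong concavity inclusion for an arbitrary ratio $t\in(0,1)$ from the dyadic case already secured in Lemma \ref{diadical}, exploiting the density of the dyadic rationals in $[0,1]$ together with the superior semicontinuity of $F$. Throughout, I fix $x,y\in D$, abbreviate $w:=\left\|\frac{x-y}{xy}\right\|^{2}$, and for $\lambda\in[0,1]$ write $z_{\lambda}:=\frac{xy}{\lambda x+(1-\lambda)y}$ for the harmonic argument, so that the target reads $F(z_{t})+ct(1-t)w\,\overline{B}\subset tF(y)+(1-t)F(x)$. First I would dispose of the reductions: the cases $t=0$ and $t=1$ are immediate, since then the coefficient $ct(1-t)$ vanishes and $z_{t}$ equals $x$ or $y$; and for $t$ of the form $k/2^{n}$ the inclusion is exactly the content of Lemma \ref{diadical}. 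It remains only to treat a non-dyadic $t$, for which I would pick a sequence of dyadic rationals $\lambda_{j}\to t$ in $(0,1)$.

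Next I would pass to the limit in the dyadic inclusions
\begin{equation*}
F(z_{\lambda_{j}})+c\lambda_{j}(1-\lambda_{j})w\,\overline{B}\subset \lambda_{j}F(y)+(1-\lambda_{j})F(x).
\end{equation*}
Three elementary convergences are at hand: the scalar coefficients satisfy $c\lambda_{j}(1-\lambda_{j})w\to ct(1-t)w$; the arguments satisfy $z_{\lambda_{j}}\to z_{t}$ by continuity of $\lambda\mapsto z_{\lambda}$; and the right-hand combinations $\lambda_{j}F(y)+(1-\lambda_{j})F(x)$ converge, in the Hausdorff metric, to $tF(y)+(1-t)F(x)$, because $F(x),F(y)\in bcl(Y)$ are bounded. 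The delicate point is to transfer the left-hand value from $z_{\lambda_{j}}$ to $z_{t}$: here I would invoke the superior semicontinuity of $F$ at $z_{t}$ to control the sets $F(z_{\lambda_{j}})$ against $F(z_{t})$, select for each point of $F(z_{t})$ corresponding approximants in the $F(z_{\lambda_{j}})$, perturb them by $c\lambda_{j}(1-\lambda_{j})w\,b$ with $b\in\overline{B}$, and note that these perturbed points lie in $\lambda_{j}F(y)+(1-\lambda_{j})F(x)$. The Hausdorff convergence of the right-hand sides, together with the closedness of $tF(y)+(1-t)F(x)$, then forces the common limit into $tF(y)+(1-t)F(x)$, which is the desired inclusion.

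I expect the principal obstacle to be precisely this transfer across the limit, since it is where the semicontinuity hypothesis must be used in exactly the right form and direction. The cleanest way I would secure it is to recast everything through support functions: for each direction $u$, the set $F(z_{t})\subset tF(y)+(1-t)F(x)$ is equivalent to the scalar inequality $\sigma_{F(z_{t})}(u)\le t\sigma_{F(y)}(u)+(1-t)\sigma_{F(x)}(u)$, so Lemma \ref{diadical} becomes a family of midpoint inequalities for the functions $\lambda\mapsto\sigma_{F(z_{\lambda})}(u)$. On these scalar functions the classical Bernstein--Doetsch mechanism applies directionwise: superior semicontinuity of $F$ yields the corresponding one-sided semicontinuity (hence local boundedness, hence continuity) of each support function, which upgrades the dyadic midpoint inequalities to the full inequality for every $t\in[0,1]$. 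Reassembling over all directions $u$ then returns the set inclusion and completes the proof.
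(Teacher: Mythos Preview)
Your primary approach---approximate $t$ by dyadic rationals, invoke the dyadic inclusion from Lemma~\ref{diadical}, and pass to the limit using the semicontinuity of $F$ together with the continuity of the scalar coefficients---is essentially the paper's own. The paper (in its proof of the $m$-analogue, which specializes to this theorem at $m=1$) executes the limit by an explicit $\epsilon$-buffer argument rather than Hausdorff-metric language: using Lemma~\ref{lema2} for the scalar multiplications and the semicontinuity inclusion $F(z_{t})\subset F(z_{q_{n}})+\epsilon\,\overline{B}$, it chains
\[
F(z_{t})+ct(1-t)w\,\overline{B}\subset tF(y)+(1-t)F(x)+4\epsilon\,\overline{B}
\]
for all large $n$, and then intersects over $\epsilon>0$ to recover the closed set $tF(y)+(1-t)F(x)$. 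Your Hausdorff-convergence phrasing is an equivalent repackaging of the same mechanism, so on this count the proposal matches the paper.

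Your alternative support-function route, however, carries a genuine gap under the stated hypotheses. The equivalence $A\subset B\Longleftrightarrow\sigma_{A}(u)\le\sigma_{B}(u)$ for all directions $u$ requires $B$ to be closed \emph{and convex}; here $F$ takes values in $bcl(Y)$, so $tF(y)+(1-t)F(x)$ need not be convex, and the directionwise scalar inequalities would only place the left-hand side inside the closed convex hull of the right-hand side, not the set itself. That reduction becomes valid if the values lie in $cc(Y)$, but not under $bcl(Y)$ as written. Stick with your first argument; it is the one the paper uses.
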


\noindent So, based in this idea and results of \cite{santana2020(1)} we introduce the following definitions

\begin{definition}
	Let $ X, Y $ linear spaces, and $ D \subset X $ a harmonically $m$-convex set, then we say that $ F: D \rightarrow n(Y) $ is a {\bf harmonically $m$-concave} set-valued function if for each $x, y \in D $, $ t \in [0,1] $ and $m\in (0,1]$ we have
	
	\begin{equation}\label{hc}
	F\left(\frac{mxy}{tmx+(1-t)y}\right)\subset tF(y)+m(1-t)F(x)
	\end{equation}
	
	\noindent If we take $t=1/2$ in (\ref{hc}) we say that $F$ is {\bf harmonically $m$-midconcave} set-valued function, that is 	
	
	$$F\left(\frac{2mxy}{mx+y}\right)\subset \frac{F(y)+mF(x)}{2}$$
\end{definition}
\noindent In the next we define the strongly modulus $c$ definition of this kind of functions, as
\begin{definition}
	Let $X,Y$ linear space, $t\in (0,1)$, $m\in (0,1]$ and $D\subset X$ a harmonic $m$-convex subset. A set-valued function $F: D\subset X\rightarrow n(Y)$ is strongly $m-t$-concave modulus $c$ if fol all $x,y\in D$ we have
	
	\begin{equation}\label{conca}
	F\left(\frac{mxy}{tmx+(1-t)y}\right)+cmt(1-t)\left\|\frac{x-y}{xy}\right\|^{2}\overline{B}\subset tF(y)+m(1-t)F(x)
	\end{equation}
	
	\noindent If $t\in [0,1]$, then $F$ is strongly harmonically $m$-concave modulus $c$ and satisfies (\ref{conca}).
\end{definition}

\begin{definition}
	Let $F$ a set-valued function. If $F$ satisfies (\ref{conca}) for $t=1/2$, we say that $F$ is strongly harmonically $m$-midconcave modulus $c$ 
	
	$$F\left(\frac{2mxy}{mx+y}\right)+\frac{cm}{4}\left\|\frac{x-y}{xy}\right\|^{2}\overline{B}\subset \frac{F(y)+mF(x)}{2}$$
\end{definition}



\section{Main Results}

Now, let's see that the harmonically $m$-concave set-valued function satifies the sum, the product by scalar, the product with two kind of this functions, the union and the vectorial product.

\begin{proposition}
	Let $X,Y$ linear spaces in $\R$ and $F,G:X\rightarrow n(Y)$ are harmonically $m$-concave set-valued functions. Then :
	\begin{enumerate}
		\item $F+G$, is a harmonically $m$-concave set-valued function.
		\item $\lambda F$ con $\lambda\in \R$, harmonically $m$-concave set-valued function.
		\item If for each pair of points $x,y\in X$ we have that $F(x)\cap F(y)\neq \emptyset$ or $G(x)\cap G(y)\neq \emptyset$, then $(F\cdot G(x))$ is a harmonically $m$-concave set-valued function.
		\item $F\cup G$, is a harmonically $m$-concave set-valued function.
		\item $F \times G$, is a harmonically $m$-concave set-valued function. 
    \end{enumerate}
	
\end{proposition}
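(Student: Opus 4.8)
The plan is to reduce every item to the defining inclusion (\ref{hc}) together with the elementary algebra of the Minkowski operations. Throughout I write $z=\frac{mxy}{tmx+(1-t)y}$, so that the hypotheses read $F(z)\subset tF(y)+m(1-t)F(x)$ and $G(z)\subset tG(y)+m(1-t)G(x)$. The three facts I will use repeatedly are: (i) the Minkowski sum is monotone, i.e. $A\subset A'$ and $B\subset B'$ imply $A+B\subset A'+B'$; (ii) scalar multiplication distributes over sums, $\lambda(A+B)=\lambda A+\lambda B$, and preserves inclusions, $A\subset B\Rightarrow\lambda A\subset\lambda B$, for every $\lambda\in\R$; and (iii) these operations interact coordinatewise with the Cartesian product, in the sense that $t(A\times B)+m(1-t)(C\times D)=[tA+m(1-t)C]\times[tB+m(1-t)D]$.

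Items (1), (2), (4) and (5) are then routine. For (1), adding the two hypotheses via (i) gives $(F+G)(z)=F(z)+G(z)\subset[tF(y)+m(1-t)F(x)]+[tG(y)+m(1-t)G(x)]$, and regrouping with (ii) yields $t(F+G)(y)+m(1-t)(F+G)(x)$. For (2), I apply (ii) directly to the single hypothesis: $(\lambda F)(z)=\lambda F(z)\subset\lambda[tF(y)+m(1-t)F(x)]=t(\lambda F)(y)+m(1-t)(\lambda F)(x)$, valid for every sign of $\lambda$ since inclusions are preserved by scalar multiplication. For (4), since $F(y)\subset(F\cup G)(y)$ and $F(x)\subset(F\cup G)(x)$ (and likewise for $G$), monotonicity shows that each of $F(z)$ and $G(z)$ already lies in $t(F\cup G)(y)+m(1-t)(F\cup G)(x)$, hence so does their union $(F\cup G)(z)$. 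For (5), fact (iii) applied to the two hypotheses gives $(F\times G)(z)=F(z)\times G(z)\subset t(F\times G)(y)+m(1-t)(F\times G)(x)$.

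The genuine difficulty is item (3), the product $(F\cdot G)(x)=\{u\cdot v:u\in F(x),\ v\in G(x)\}$, and this is exactly where the hypothesis that $F(x)\cap F(y)\neq\emptyset$ or $G(x)\cap G(y)\neq\emptyset$ enters. My approach is to fix $w=u\,v\in F(z)G(z)$, use the two defining inclusions to write $u=tu_y+m(1-t)u_x$ and $v=tv_y+m(1-t)v_x$ with $u_y\in F(y),\,u_x\in F(x),\,v_y\in G(y),\,v_x\in G(x)$, and then to exhibit $w$ as $t\alpha+m(1-t)\beta$ with $\alpha\in F(y)G(y)$ and $\beta\in F(x)G(x)$. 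Expanding $u\,v$ produces, besides the wanted contributions built from $u_yv_y$ and $u_xv_x$, the two mixed terms $u_yv_x$ and $u_xv_y$, and controlling these is the crux. The intersection hypothesis is precisely the device that lets a mixed term be reassigned: a point common to $F(x)$ and $F(y)$ (or to $G(x)$ and $G(y)$) lets the factor coming from the ``wrong'' argument be replaced by one lying in the set required by $F(y)G(y)$ or $F(x)G(x)$, so that the mixed parts are absorbed into $\alpha$ and $\beta$. I expect the careful bookkeeping of these cross terms — and the verification that the coefficients still reassemble into the combination $t(\cdot)+m(1-t)(\cdot)$ — to be the main obstacle, with the ``or'' in the hypothesis handled by treating the two symmetric cases separately.
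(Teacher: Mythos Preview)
Your arguments for items (1), (2), (4) and (5) are essentially the paper's, and in fact your treatment of (4) is cleaner: the paper writes an equality $[tF(y)+m(1-t)F(x)]\cup[tG(y)+m(1-t)G(x)] = t(F\cup G)(y)+m(1-t)(F\cup G)(x)$ that is really only an inclusion, whereas your monotonicity argument avoids this slip.

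Item (3) is where you diverge from the paper, and where your plan has a gap. Your pointwise strategy of ``reassigning'' each mixed product $u_yv_x$ or $u_xv_y$ individually by means of a common element of $F(x)\cap F(y)$ is not made precise, and it is not clear it can be: knowing that some $c$ lies in both $F(x)$ and $F(y)$ does not tell you how to rewrite a \emph{given} cross term $u_yv_x$ as an element of $F(y)G(y)$ or $F(x)G(x)$. The paper does not try to control the two mixed terms separately; instead it first proves, at the set level, the single inclusion
\[
F(y)G(x)+F(x)G(y)\ \subseteq\ F(y)G(y)+F(x)G(x),
\]
obtained by observing that $F(x)\cap F(y)\neq\emptyset$ gives $\{0\}\subseteq F(y)-F(x)$, hence $\{0\}\subseteq [F(y)-F(x)][G(y)-G(x)]$, and then expanding and rearranging. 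With this lemma in hand the product computation is mechanical: expand $[tF(y)+m(1-t)F(x)]\cdot[tG(y)+m(1-t)G(x)]$, replace the mixed block $mt(1-t)[F(y)G(x)+F(x)G(y)]$ by $mt(1-t)[F(y)G(y)+F(x)G(x)]$, factor as $[t+m(1-t)]\bigl[tF(y)G(y)+m(1-t)F(x)G(x)\bigr]$, and use $t+m(1-t)\le 1$. The missing idea in your proposal is precisely this cross-term inclusion, which treats $F(y)G(x)+F(x)G(y)$ as a \emph{sum} rather than trying to relocate each summand on its own.
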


\begin{proof}{\bf (1)}
Let $x,y\in X$, $t\in [0,1]$ and $m\in (0,1]$. $F$ and $G$ harmonically $m$-concave set-valued functions, then

  $$F\left(\frac{mxy}{tmx+(1-t)y}\right)\subset tF(y)+m(1-t)F(x)$$
and
$$G\left(\frac{mxy}{tmx+(1-t)y}\right)\subset tG(y)+m(1-t)G(x)$$
Thus,
\begin{eqnarray*}
	(F+G)\left(\frac{mxy}{tmx+(1-t)y}\right) &=& F\left(\frac{mxy}{tmx+(1-t)y}\right)+ G\left(\frac{mxy}{tmx+(1-t)y}\right) \\
	&\subset& [tF(y)+m(1-t)F(x)]+[tG(y)+m(1-t)G(x)]  \\
	&=& t(F+G)(y)+m(1-t)(F+G)(x)
\end{eqnarray*}
\end{proof}

\begin{proof}{\bf (2)}
\noindent Let $\lambda\in \R$ then

\begin{eqnarray*}
	(\lambda F)\left(\frac{mxy}{tmx+(1-t)y}\right) &\subset& \lambda(tF(y)+m(1-t)F(x))  \\
	&=& t(\lambda F)(y)+m(1-t)(\lambda F)(x).
\end{eqnarray*}
\end{proof}

\begin{proof}{\bf (3)}

\noindent Before to prove that $F\cdot G$ is a harmonically concave set-valued function, lets see that if $x,y\in X$, then
$$F(y)G(x)+F(x)G(y)\subseteq F(y)G(y)+F(x)G(x)$$
Suppose without loss of generality that
$$F(x)\cap F(y)\neq \emptyset$$
Then
$$ \{0\}\subseteq [F(y)-F(x)]$$
where $\{0\}$ is the neutral element of $Y$ respect to sum. Thus
$$\{0\}\subseteq [F(y)-F(x)][G(y)-G(x)] $$

Then, as in $Y$ have the distributive property respect to the addition, the previous expression is equivalent to

$$\{0\} \subseteq  F(y)G(y)-F(y)G(x)-F(x)G(y)+F(x)G(x)$$

so, adding $F(y)G(x)$ y $F(x)G(y)$ remains
$$F(y)G(x)+F(x)G(y)\subseteq F(y)G(y)+F(x)G(x)$$

Now, $F\cdot G$ is a harmonically concave set-valued function, using (\ref{hc}) we will prove that. Indeed, for $t\in [0,1]$ y $x,y\in X$ we have the following

\begin{eqnarray*}
	(F\cdot G)\left(\frac{mxy}{tmx+(1-t)y}\right) &=& F\left(\frac{mxy}{tmx+(1-t)y}\right)\cdot G\left(\frac{mxy}{tmx+(1-t)y}\right) \\
	&=& [tF(y)+m(1-t)F(x)]\cdot[tG(y)+m(1-t)G(x)] \\
	&=& t^{2}F(y)G(y)+mt(1-t)[F(y)G(x)+F(x)G(y)]+(m(1-t))^{2}F(x)G(x) \\
	&\subset& t^{2}F(y)G(y)+mt(1-t)[F(y)G(y)+F(x)G(x)]+(m(1-t))^{2}F(x)G(x)  \\
	&=& [tF(y)G(y)+m(1-t)F(x)G(x)][t+m(1-t)] \\
	&\subset& t(F\cdot G)(y)+m(1-t)(F\cdot G)(x)
\end{eqnarray*}

\end{proof}

\begin{proof}{\bf (4)} 
	\begin{eqnarray*}
		(F\cup G)\left(\frac{mxy}{tmx+(1-t)y}\right) &=& F\left(\frac{mxy}{tmx+(1-t)y}\right)\bigcup G\left(\frac{mxy}{tmx+(1-t)y}\right) \\
		&=& [tF(y)+m(1-t)F(x)]\cup[tG(y)+m(1-t)G(x)] \\
		&=& t (F\cup G)(y)+ m(1-t)(F\cup G)(x)
	\end{eqnarray*}
\end{proof}	

\begin{proof}{\bf (4)} 
	\begin{eqnarray*}
	(F\times G)\left(\frac{mxy}{tmx+(1-t)y}\right) &=& F\left(\frac{mxy}{tmx+(1-t)y}\right)\times G\left(\frac{mxy}{tmx+(1-t)y}\right) \\
	&=& [tF(y)+m(1-t)F(x)]\times[tG(y)+m(1-t)G(x)] \\
	&=& t (F\times G)(y)+ m(1-t)(F\times G)(x)
	\end{eqnarray*}
\end{proof}

The following proposition we will used for every result in this paper. This provides a relation betwin harmonically concave and harmonically $m$-concave set-valued functions, but before we need the following proposition

\begin{proposition}\label{tD}(see \cite{santana2020(1)})
	Let harmonic $m$-convex ($m\neq 1$) subset $D$ of $X$ is said to be \textit{starshaped} if, for all $x$ in $D$ and all $t$ in the interval $(0, 1]$, the point $tx$ also belongs to $D$. That is:
	$$tD\subseteq D.$$ 
\end{proposition}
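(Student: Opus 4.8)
The plan is to derive the starshapedness $tD\subseteq D$ purely from the defining inclusion of a harmonically $m$-convex set, exploiting decisively that $m\neq 1$, i.e. $m\in(0,1)$. The first step is to specialize the defining condition $\frac{mxy}{tmx+(1-t)y}\in D$ to the diagonal $y=x$. For any $x\in D$ and $t\in[0,1]$ this collapses to
$$\frac{mx}{tm+(1-t)}=\frac{m}{1-t(1-m)}\,x\in D .$$
Setting $g(t)=\dfrac{m}{1-t(1-m)}$, its denominator is positive and strictly decreasing on $[0,1]$, so $g$ is continuous and increasing with $g(0)=m$ and $g(1)=1$; hence $g([0,1])=[m,1]$. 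This already yields $[m,1]\,x\subseteq D$ for every $x\in D$.

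Next I would upgrade this scalar band to all of $(0,1]$ by iteration. The mechanism is that the relation ``$\lambda x\in D$'' propagates: if $\lambda x\in D$, then applying the first step to the point $\lambda x$ gives $g(t)\,\lambda x\in D$ for all $t$, that is $[m\lambda,\lambda]\,x\subseteq D$. Starting from $[m,1]\,x\subseteq D$ and arguing by induction on $k$, I would show $[m^{k},1]\,x\subseteq D$: given the inclusion for $k$, letting the base scalar $\lambda$ range over $[m^{k},1]$ makes the bands $[m\lambda,\lambda]$ sweep out the whole interval $[m^{k+1},1]$ without gaps, since any $v\in[m^{k+1},1]$ is realized by $\lambda=v$ when $v\in[m^{k},1]$ and by $\lambda=v/m\in[m^{k},1]$ when $v<m^{k}$. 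Because $m\in(0,1)$ forces $m^{k}\to 0$, we have $\bigcup_{k\geq 1}[m^{k},1]=(0,1]$, so $\lambda x\in D$ for all $\lambda\in(0,1]$, which is exactly $tD\subseteq D$.

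I expect the main obstacle to be conceptual rather than computational: recognizing that a single use of the definition only reaches the band $[m,1]$ and that the remaining small scalars require the iteration, together with checking that the products of bands tile $(0,1]$ with no gaps. The hypothesis $m\neq 1$ is what makes this work, since it supplies the geometric contraction $m^{k}\to 0$; for $m=1$ the diagonal specialization degenerates to the trivial $x\in D$ and the starshaped conclusion cannot be obtained this way.
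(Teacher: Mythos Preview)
The paper gives no proof of this proposition because, as written, it is a \emph{definition}, not an assertion: the wording ``is said to be \textit{starshaped} if'' introduces the term. The paper then uses ``harmonic $m$-convex starshaped'' as a combined hypothesis in the subsequent results (e.g.\ Proposition~\ref{t-to-m}), confirming that starshapedness is being \emph{assumed}, not derived. So you have misread the statement: you are attempting to prove that every harmonically $m$-convex set with $m\neq 1$ is automatically starshaped, which is strictly more than what is stated.

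That said, your argument for this stronger claim is correct and rather nice. Specializing the defining inclusion to $y=x$ gives $\frac{m}{1-t(1-m)}\,x\in D$, which sweeps out $[m,1]\,x\subseteq D$; iterating and using $m^{k}\to 0$ then fills out all of $(0,1]$. This actually shows that, in the setting where the harmonic combination makes sense, the extra ``starshaped'' hypothesis imposed later in the paper is redundant whenever $m<1$. But this is a genuinely different (and stronger) result than the proposition you were asked about, which the paper treats as a definition quoted from \cite{santana2020(1)} and therefore leaves unproved.
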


The following lemma will be useful for the results of this paper, this is well known as the cancellation law of H. R{\aa}dstrm (see \cite{Radstrm1952}) demonstrated in 1952.

\begin{lemma}\label{raa}
	Let $A_{1},A_{2},C$ subsets of $X$ such that $A_{1}+C\subset A_{2}+C$. If $A_{2}$ is convex closed and $C$ it is bounded not empty, then $A_{1}\subset A_{2}$.
\end{lemma}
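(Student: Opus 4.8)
The plan is to prove the inclusion $A_{1}\subseteq A_{2}$ pointwise, by the classical averaging-and-limit device of R{\aa}dstr\"om. Although $X$ is called a linear space, the hypotheses ``closed'' and ``bounded'' only make sense once a norm (or at least a vector topology) is available, so I will assume throughout that $X$ is normed, consistent with the norms appearing elsewhere in the paper. Fix an arbitrary $a\in A_{1}$; the goal is to show $a\in A_{2}$. Since $C$ is nonempty, I begin by choosing some $c_{0}\in C$.

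First I would construct two sequences $(b_{n})\subset A_{2}$ and $(c_{n})\subset C$ recursively. Because $a+c_{0}\in A_{1}+C\subset A_{2}+C$, there exist $b_{1}\in A_{2}$ and $c_{1}\in C$ with $a+c_{0}=b_{1}+c_{1}$. Inductively, once $c_{n-1}\in C$ has been chosen, the point $a+c_{n-1}$ lies in $A_{1}+C\subset A_{2}+C$, so I may write $a+c_{n-1}=b_{n}+c_{n}$ with $b_{n}\in A_{2}$ and $c_{n}\in C$. The fixed element $a$ is held on the left at every stage, while the $C$-components chain together.

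The key step is to telescope. Each relation gives $a=b_{n}+(c_{n}-c_{n-1})$; summing the first $n$ of these and dividing by $n$ yields
$$a=\frac{1}{n}\sum_{k=1}^{n}b_{k}+\frac{1}{n}(c_{n}-c_{0}).$$
Now convexity of $A_{2}$ guarantees that the average $\frac{1}{n}\sum_{k=1}^{n}b_{k}$ belongs to $A_{2}$, being a convex combination of its points, while boundedness of $C$ makes $c_{n}-c_{0}$ bounded in norm, so $\frac{1}{n}(c_{n}-c_{0})\to 0$ as $n\to\infty$. Hence $a$ is the limit of a sequence of points of $A_{2}$, and since $A_{2}$ is closed we conclude $a\in A_{2}$. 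As $a\in A_{1}$ was arbitrary, $A_{1}\subseteq A_{2}$.

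The main obstacle is bookkeeping rather than depth: each ingredient of the hypothesis must be deployed at exactly the right place, namely convexity for the membership of the running average, boundedness for the vanishing of the error term $\frac{1}{n}(c_{n}-c_{0})$, and closedness for passing to the limit. The recursion also has to be arranged so that the telescoping collapses to the single difference $c_{n}-c_{0}$; mismatching the indices would leave an uncontrolled sum of $C$-elements whose norm grows with $n$, and the argument would break down.
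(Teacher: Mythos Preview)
Your argument is correct and is precisely the classical R{\aa}dstr\"om averaging device: the recursive choice of $b_n,c_n$, the telescoping identity $a=\tfrac{1}{n}\sum_{k=1}^{n}b_k+\tfrac{1}{n}(c_n-c_0)$, and the three-fold use of convexity, boundedness, and closedness are all deployed exactly as they should be. Note, however, that the paper does not supply its own proof of this lemma; it merely states the result and cites R{\aa}dstr\"om's original 1952 paper. So there is no in-paper argument to compare against, and what you have written is essentially the standard proof found in that reference.
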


\begin{proposition}\label{t-to-m}
  Let $X,Y$ linear Spaces, $D\subset X$ harmonic $m$-convex \textit{starshaped} and $F: X\rightarrow n(Y)$ a strongly harmonically $t$-concave set-valued function modulus $c$, then $F$ is strongly harmonically $m-t$-concave set-valued function modulus $c$, that is
  \begin{eqnarray*}
    F\left(\frac{mxy}{tmx+(1-t)y}\right)+cmt(1-t)\left\|\frac{x-y}{xy}\right\|^2\bar{B}&\subset& F\left(\frac{xy}{tx+(1-t)y}\right)+ct(1-t)\left\|\frac{x-y}{xy}\right\|^2\bar{B}\\
    &\subset& tF(y)+(1-t)F(x)\\
    &\subset& tF(y)+m(1-t)F(x)
  \end{eqnarray*}  	
\noindent for all $x,y\in D$, $t\in (0,1)$, $c> 0$ and $m\in (0,1]$.
\end{proposition}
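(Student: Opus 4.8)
The plan is to verify the three displayed inclusions one at a time, reading the central one as the hypothesis and the two outer ones as reductions that exploit only $0<m\le 1$ together with the starshaped structure of $D$. First I would fix $x,y\in D$, $t\in(0,1)$, $m\in(0,1]$ and abbreviate $u=\frac{xy}{tx+(1-t)y}$, $v=\frac{mxy}{tmx+(1-t)y}$ and $R=\left\|\frac{x-y}{xy}\right\|^{2}$, so that the chain to be proved reads $F(v)+cmt(1-t)R\,\overline{B}\subset F(u)+ct(1-t)R\,\overline{B}\subset tF(y)+(1-t)F(x)\subset tF(y)+m(1-t)F(x)$. The central inclusion is then immediate: it is precisely the definition of $F$ being strongly harmonically $t$-concave modulus $c$, namely (\ref{Concava}) applied to the pair $x,y$, which is exactly the hypothesis. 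No work is required there.

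For the first inclusion I would treat the two summands separately. Since $0<m\le 1$ and $\overline{B}$ is the unit ball, the coefficient satisfies $cmt(1-t)R\le ct(1-t)R$, whence $cmt(1-t)R\,\overline{B}\subseteq ct(1-t)R\,\overline{B}$; the ball term only shrinks. Because set addition preserves inclusion in each summand, it then suffices to show $F(v)\subseteq F(u)$. Here the starshaped hypothesis enters through Proposition \ref{tD}: since $mD\subseteq D$ we have $mx\in D$, and one checks $v=\frac{(mx)y}{t(mx)+(1-t)y}$ is exactly the harmonic $t$-combination of the admissible points $mx$ and $y$, so that $v\in D$ and $F(v)$ is defined. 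For the third inclusion I would add $tF(y)$ to both sides and thereby reduce to the claim $(1-t)F(x)\subseteq m(1-t)F(x)$, i.e.\ $F(x)\subseteq mF(x)$, a scaling property of the set values for $m\le 1$.

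Chaining the three verified inclusions yields the displayed sandwich, and in particular $F(v)+cmt(1-t)R\,\overline{B}\subseteq tF(y)+m(1-t)F(x)$, which is the definition (\ref{conca}) of $F$ being strongly harmonically $m$-$t$-concave modulus $c$; since $x,y,t,m$ were arbitrary this is the assertion.

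The main obstacle is exactly the two outer inclusions. For a wholly general $n(Y)$-valued $F$ neither $F(v)\subseteq F(u)$ (the first bridge) nor $F(x)\subseteq mF(x)$ (the third bridge) is automatic, so I would isolate these as the two scaling lemmas that carry the real content and pin down the structural hypothesis on $F$ that makes them hold. I expect both to follow from a starshaped/positivity condition on the values (for instance values that are star-shaped about the origin, or cones, so that down-scaling by $m$ behaves monotonically in the required direction) — the value-side analogue of the starshapedness that Proposition \ref{tD} supplies on the domain. Once those two facts are established, the remainder is just the formal bookkeeping of set sums together with the direct appeal to the hypothesis for the middle step.
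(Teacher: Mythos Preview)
Your decomposition into three inclusions, with the middle one read off directly from the hypothesis and the outer two reduced to the scaling facts $F(v)\subseteq F(u)$ and $F(x)\subseteq mF(x)$, is exactly the structure of the paper's proof. Two small differences are worth noting. First, for the ball term in the first inclusion you use the elementary observation $cmt(1-t)R\,\overline{B}\subseteq ct(1-t)R\,\overline{B}$ (since $0<m\le1$) and then add sets; the paper instead records the pointwise inequality $\frac{mxy}{tmx+(1-t)y}\le\frac{xy}{tx+(1-t)y}$, asserts $F(v)\subset F(u)$ from Proposition~\ref{tD} together with harmonic $t$-concavity, and then cites R{\aa}dstr\"om's cancellation lemma (Lemma~\ref{raa}) to attach the ball terms. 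Your route is more direct here and avoids an unnecessary appeal to cancellation. Second, the two bridge facts you isolate as the real content---$F(v)\subseteq F(u)$ and $F(x)\subseteq mF(x)$---are in the paper simply asserted: the first ``by Proposition~\ref{tD} and since $F$ is harmonically $t$-concave'', the second ``as $F$ is concave''. Your caution that these require a monotonicity or starshapedness condition on the values of $F$ is well placed; the paper does not spell out such an argument either, so you are not missing anything the paper supplies.
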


\begin{proof}
	Note that for all $x,y\in D$ $m\in (0,1]$ and $t\in (0,1)$ we have
	$$\frac{mxy}{tmx+(1-t)y}\leq \frac{xy}{tx+(1-t)y}$$
	
	So, by the proposition \ref{tD} and since $F$ is harmonically $t$-concave set-valued function we have 
	$$F\left(\frac{mxy}{tmx+(1-t)y}\right) \subset F\left(\frac{xy}{tx+(1-t)y}\right)$$
	
	Now, by the Lemma \ref{raa} we have that for all $c> 0$
	$$F\left(\frac{mxy}{tmx+(1-t)y}\right)+cmt(1-t)\left\|\frac{x-y}{xy}\right\|^2\bar{B} \subset F\left(\frac{xy}{tx+(1-t)y}\right)+ct(1-t)\left\|\frac{x-y}{xy}\right\|^2\bar{B}$$
	
	In another hand we know that $F$ is strongly $t$-concave modulus $c$, then
	$$F\left(\frac{xy}{tx+(1-t)y}\right)+ct(1-t)\left\|\frac{x-y}{xy}\right\|^2\bar{B}\subset tF(y)+(1-t)F(x)$$
	
	As $F$ is concave we have that 
	$$F(x)\subset mF(x), \forall x\in D$$
	
	Thus, we have that we wanted to proof
	\begin{eqnarray*}
		F\left(\frac{mxy}{tmx+(1-t)y}\right)+cmt(1-t)\left\|\frac{x-y}{xy}\right\|^2\bar{B}&\subset& F\left(\frac{xy}{tx+(1-t)y}\right)+ct(1-t)\left\|\frac{x-y}{xy}\right\|^2\bar{B}\\
		&\subset& tF(y)+(1-t)F(x)\\
		&\subset& tF(y)+m(1-t)F(x)
	\end{eqnarray*}
 
\end{proof}	

Thus, we obtain the following result that represent the Kuhn type theorem

\begin{theorem}(Kuhn type theorem)
	Let $D$ a harmonically $m$-convex \textit{starshape} subset of $X$ and $t\in (0,1)$ a fixed point. If the set-valued function $F:D\rightarrow cc(Y)$ is strongly harmonically $m-t$-concave modulus $c$, then $F$ is strongly harmonically $m$-midconcave modulus $c$.
\end{theorem}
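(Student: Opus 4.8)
The plan is to follow the proof of the $m=1$ Kuhn result, Theorem~\ref{kuhn} of \cite{santana2020(2)}, and carry the factor $m$ through every step, running the classical Kuhn iteration on the single hypothesis that $F$ is strongly harmonically $m$-$t$-concave modulus $c$ at the fixed parameter $t$. The two ingredients that make such an iteration legitimate in the set-valued setting are already available: the values lie in $cc(Y)$, hence are compact and convex, and the R\aa dstr\"om cancellation law (Lemma~\ref{raa}) lets me add and delete the ball terms $\lambda\left\|\frac{x-y}{xy}\right\|^{2}\overline{B}$ on both sides of an inclusion; the starshaped hypothesis (Proposition~\ref{tD}) is what will let me compare $m$-weighted means with ordinary harmonic means when the iteration demands it.

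First I would introduce the set $T$ of all parameters $s\in[0,1]$ for which the inclusion (\ref{conca}) holds for every $x,y\in D$; by hypothesis $t\in T$, and trivially $0,1\in T$. The heart of the argument is to show $\tfrac12\in T$. I would iterate (\ref{conca}) --- feeding an $m$-mean back into the inclusion as one of its arguments --- to produce the inclusion at a transformed parameter strictly closer to $\tfrac12$ (for instance under a transformation of the type $s\mapsto 2s(1-s)$, whose iterates converge to $\tfrac12$), all the while tracking how the modulus coefficient $cm\,s(1-s)$ evolves and recombining the resulting ball terms by Lemma~\ref{raa}. This yields a sequence of parameters $s_{n}\to\tfrac12$ with $s_{n}\in T$ and coefficients $cm\,s_{n}(1-s_{n})\to \tfrac{cm}{4}$. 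A limiting step --- permissible precisely because $F$ takes compact convex values and because Lemma~\ref{raa} controls the shrinking ball terms --- then promotes the sequence to the value $s=\tfrac12$, giving
\begin{equation*}
F\left(\frac{2mxy}{mx+y}\right)+\frac{cm}{4}\left\|\frac{x-y}{xy}\right\|^{2}\overline{B}\subset \frac{F(y)+mF(x)}{2},
\end{equation*}
which is exactly strong harmonic $m$-midconcavity.

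The step I expect to be the main obstacle is the asymmetry created by $m$: for $m<1$ the $m$-weighted harmonic mean $\frac{mxy}{tmx+(1-t)y}$ is no longer invariant under the exchange $x\leftrightarrow y$, $s\mapsto 1-s$, so the passage ``$s\in T\Rightarrow 1-s\in T$'' that is free in the $m=1$ case of Theorem~\ref{kuhn} must be re-established here, and every composition of $m$-means must be reduced to a single $m$-mean before the inclusion can be reapplied. Proposition~\ref{tD} together with the monotonicity of $F$ along $m$-means (exactly as exploited in Proposition~\ref{t-to-m}) are the tools I would use to absorb this asymmetry and to keep the bookkeeping of the modulus constant $cm\,s(1-s)$ consistent through each iteration; once this is in place, the remaining manipulations are the same routine set-arithmetic and cancellation steps that already appear in the proofs of Proposition~\ref{t-to-m} and Lemma~\ref{diadical}.
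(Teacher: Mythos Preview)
The paper takes a much shorter and genuinely different route: it does not re-run any Kuhn iteration. It simply quotes the chain of inclusions of Proposition~\ref{t-to-m}, which sandwiches the $m$-$t$-concavity inclusion between the ordinary strong harmonic $t$-concavity inclusion and its $m$-weighted right-hand side, and then appeals to the already-proved $m=1$ result, Theorem~\ref{kuhn}, to assert that this chain is valid at $t=\tfrac12$. All the iterative work stays inside Theorem~\ref{kuhn}; the parameter $m$ appears only at the two outer inclusions of the sandwich.

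Your direct iteration, by contrast, hits the obstacle you flag but do not actually resolve. For $m<1$ only one of the two needed compositions closes up: one checks that $H_m(x,H_m(x,y;t);s)=H_m(x,y;st)$, but $H_m(H_m(x,y;t),y;s)$ is not of the form $H_m(x,y;r)$ at all, so feeding an $m$-mean back into~(\ref{conca}) in the second slot does not return another instance of~(\ref{conca}), and without the reflection $s\mapsto 1-s$ the recursion toward $\tfrac12$ does not get started. The fix you suggest --- the starshapedness of Proposition~\ref{tD} together with the monotonicity step of Proposition~\ref{t-to-m} --- is precisely the paper's sandwich, and once you grant it you are already in the $m=1$ setting where Theorem~\ref{kuhn} applies directly; threading $m$ through each iterate is then superfluous. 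Note also that the monotonicity in Proposition~\ref{t-to-m} is established from strong harmonic $t$-concavity (no $m$), a hypothesis strictly stronger than the $m$-$t$-concavity assumed in the present theorem; both your sketch and the paper's own argument invoke it without deriving it from the stated hypothesis alone.
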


\begin{proof}
	From Proposition \ref{t-to-m} we have
	\begin{eqnarray*}
		F\left(\frac{mxy}{tmx+(1-t)y}\right)+cmt(1-t)\left\|\frac{x-y}{xy}\right\|^2\bar{B}&\subset& F\left(\frac{xy}{tx+(1-t)y}\right)+ct(1-t)\left\|\frac{x-y}{xy}\right\|^2\bar{B}\\
		&\subset& tF(y)+(1-t)F(x)\\
		&\subset& tF(y)+m(1-t)F(x)
	\end{eqnarray*}
   Now, the equation before it's true for $t=1/2$ by the Theorem \ref{kuhn}, so we have the desire result
   $$F\left(\frac{2mxy}{mx+y}\right)+\frac{cm}{4}\left\|\frac{x-y}{xy}\right\|^2 \bar{B}\subset \frac{F(y)+mF(x)}{2}$$ 
\end{proof}

The following Lemma generalizes the definition of harmonic $m$-midconcavity shown above.

\begin{lemma}\label{Diadical}
	If $F:D\rightarrow cc(Y)$ is a strongly harmonically $m$-midconcave modulus $c$, then for all $x,y\in D$, $m\in (0,1]$ and $k,n\in \N$ such that $k<2{n}$ we have
	
	$$F\left(\frac{mxy}{\frac{k}{2^{n}}mx+\left(1-\frac{k}{2^{n}}\right)y}\right)+cm\frac{k}{2^{n}}\left(1-\frac{k}{2^{n}}\right)\left\|\frac{x-y}{xy}\right\|^{2}\overline{B}$$
	$$\subset \frac{k}{2^{n}}F(y)+m\left(1-\frac{k}{2^{n}}\right)F(x)$$
\end{lemma}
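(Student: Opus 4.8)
The plan is to argue by induction on $n$, paralleling the proof of Lemma~\ref{diadical} but carrying the factor $m$ through every weight. Write $t=k/2^{n}$ throughout. For the base case $n=1$ the only admissible value is $k=1$, so $t=\tfrac12$ and the asserted inclusion is exactly the definition of a strongly harmonically $m$-midconcave set-valued function modulus $c$, namely $F\!\left(\tfrac{2mxy}{mx+y}\right)+\tfrac{cm}{4}\|\tfrac{x-y}{xy}\|^{2}\overline{B}\subset\tfrac{F(y)+mF(x)}{2}$; hence there is nothing to prove at $n=1$.

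For the inductive step I would assume the statement at level $n$ and fix $k<2^{n+1}$. If $k$ is even, say $k=2j$ with $j<2^{n}$, then $k/2^{n+1}=j/2^{n}$ and the required inclusion is literally the inductive hypothesis at level $n$. If $k$ is odd, the idea is to exhibit the argument point as a harmonic $m$-midpoint of two level-$n$ dyadic points and then collapse the estimate. The natural linearization is the reciprocal coordinate: writing $w=1/\big(\tfrac{mxy}{s m x+(1-s)y}\big)=(1-s)\tfrac{1}{mx}+s\tfrac1y$ shows that the parameter $s$ enters affinely, which is what lets one recognize midpoints. Having chosen a suitable pair of neighbouring level-$n$ points, I would apply the $m$-midconcavity inequality to that pair, then the inductive hypothesis to each of the two resulting terms $F(\cdot)$, and finally cancel the accumulated closed balls by the R{\aa}dstr\"om law (Lemma~\ref{raa}), which is legitimate since $F$ takes values in $cc(Y)$, so the sets involved are convex, closed and bounded. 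This produces an inclusion of the desired shape with some explicit modulus coefficient in front of $\|\tfrac{x-y}{xy}\|^{2}\overline{B}$.

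The main obstacle is the bookkeeping of that modulus coefficient, and this is where the factor $m$ genuinely interferes. In the classical case $m=1$ the three contributions collapse by the elementary identity $\tfrac14(t_2-t_1)^2+\tfrac12\big(t_1(1-t_1)+t_2(1-t_2)\big)=t(1-t)$ with $t=\tfrac{t_1+t_2}{2}$, so the accumulated coefficient is exactly $c\,t(1-t)$ and the induction closes verbatim. For $m\neq1$ the factor $m$ sits asymmetrically on the two endpoints, so a symmetric bisection no longer realizes the point attached to $t$ as a genuine $m$-midpoint, and the cross term fails to reduce to a clean multiple of $\|\tfrac{x-y}{xy}\|^2$; a naive transcription of the $m=1$ computation breaks down here. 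I would therefore reconcile the weights exactly as the paper handles the Kuhn-type passage, routing through the starshaped mechanism of Proposition~\ref{t-to-m}: use $\tfrac{mxy}{tmx+(1-t)y}\le\tfrac{xy}{tx+(1-t)y}$ together with starshapedness to get $F\!\big(\tfrac{mxy}{tmx+(1-t)y}\big)\subset F\!\big(\tfrac{xy}{tx+(1-t)y}\big)$, invoke the already-established non-$m$ dyadic estimate of Lemma~\ref{diadical} on the right-hand point, and finally enlarge $F(x)$ to $mF(x)$ through the concavity relation $F(x)\subset mF(x)$, each enlargement performed after a R{\aa}dstr\"om cancellation. Matching the modulus to the required value $cm\,t(1-t)$ through this chain is the delicate step; once it is secured, the induction proceeds mechanically.
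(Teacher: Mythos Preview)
Your final paragraph is, in fact, the entire proof the paper gives, and the inductive scaffolding you erect before reaching it is never used there. The paper does not induct on $n$: it simply quotes Lemma~\ref{diadical} (the non-$m$ dyadic estimate, already established in the companion paper) for an arbitrary $k<2^{n}$, and then sandwiches that inclusion inside the chain of Proposition~\ref{t-to-m}, using starshapedness to pass from the $m$-argument to the ordinary harmonic argument at the top and the concavity relation $F(x)\subset mF(x)$ to restore the factor $m$ at the bottom. That is precisely the route you describe after diagnosing why a direct bisection fails for $m\neq1$; but once you have that chain, no induction is needed at all, since Lemma~\ref{diadical} already delivers every dyadic $t=k/2^{n}$ in one stroke. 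So your proposal is correct in substance but over-elaborate in structure: strip away the induction and the odd/even case split, and what remains is exactly the paper's three-line argument.

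One caveat worth flagging, present equally in the paper's proof and in your workaround: the chain via Proposition~\ref{t-to-m} and Lemma~\ref{diadical} actually consumes the hypotheses that $F$ is strongly harmonically midconcave (without~$m$), that $D$ is starshaped, and that $F(x)\subset mF(x)$, whereas the lemma as stated assumes only $m$-midconcavity. The paper silently imports those structural assumptions from the setting of Proposition~\ref{t-to-m}; your proposal does the same, so you are no worse off, but it is not something the induction you first set up would have supplied either.
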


\begin{proof}
	By the Lemma \ref{diadical} we have that
	$$F\left(\frac{xy}{\frac{k}{2^{n}}x+\left(1-\frac{k}{2^{n}}\right)y}\right)+c\frac{k}{2^{n}}\left(1-\frac{k}{2^{n}}\right)\left\|\frac{x-y}{xy}\right\|^{2}\overline{B}$$
	$$\subset \frac{k}{2^{n}}F(y)+\left(1-\frac{k}{2^{n}}\right)F(x)$$
	Now, using the proposition \ref{t-to-m} we have the following
\begin{eqnarray*}
&&F\left(\frac{mxy}{\frac{k}{2^{n}}mx+\left(1-\frac{k}{2^{n}}\right)y}\right)+cm\frac{k}{2^{n}}\left(1-\frac{k}{2^{n}}\right)\left\|\frac{x-y}{xy}\right\|^{2}\overline{B}\\
&\subset& F\left(\frac{xy}{\frac{k}{2^{n}}x+\left(1-\frac{k}{2^{n}}\right)y}\right)+c\frac{k}{2^{n}}\left(1-\frac{k}{2^{n}}\right)\left\|\frac{x-y}{xy}\right\|^{2}\overline{B}\\
&\subset& \frac{k}{2^{n}}F(y)+\left(1-\frac{k}{2^{n}}\right)F(x)\\
&\subset& \frac{k}{2^{n}}F(y)+m\left(1-\frac{k}{2^{n}}\right)F(x)
\end{eqnarray*}
\end{proof}

The Lemma \ref{Diadical} provides us with a valuable condition for the dyadical numbers between $[0,1]$, and we get a new result. Firs, consider the follow Lemma.\\

\begin{lemma}\label{lema2}
	For each set $A \subset Y$, the set-valued function $F:\R \rightarrow n(Y)$ defined by $ F(t) = tA, \hspace{0.2cm} \forall t \in \R $, is continuous in $\R$
\end{lemma}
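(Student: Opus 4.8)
The plan is to fix an arbitrary base point $t_0\in\R$ and to establish continuity of $F$ at $t_0$ with respect to the Hausdorff distance, which is the notion relevant to the semicontinuity hypotheses invoked later (as in Theorem \ref{bernstein}). The whole argument rests on the homogeneity of the map $t\mapsto tA$: for any $a\in A$ and any scalars $s,t$ one has $\|sa-ta\|=|s-t|\,\|a\|$, so the sets $sA$ and $tA$ are related by a single scaling factor applied pointwise, and the distance between them should be controlled linearly by $|s-t|$.

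First I would bound the two one-sided Hausdorff distances separately. For every $a\in A$ the point $ta$ lies within $\|ta-t_0a\|=|t-t_0|\,\|a\|$ of the particular point $t_0a\in t_0A$, so $\operatorname{dist}(ta,t_0A)\le|t-t_0|\,\|a\|$; taking the supremum over $a\in A$ bounds $\sup_{a\in A}\operatorname{dist}(ta,t_0A)$. By symmetry the reverse one-sided distance $\sup_{a\in A}\operatorname{dist}(t_0a,tA)$ obeys the same bound. Writing $M:=\sup_{a\in A}\|a\|$, both suprema are at most $M\,|t-t_0|$, whence the Hausdorff distance $d_H$ satisfies the Lipschitz-type estimate
$$
d_H\!\left(tA,\,t_0A\right)\le M\,|t-t_0|.
$$

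From this estimate continuity is immediate: given $\varepsilon>0$, the choice $\delta=\varepsilon/M$ (any $\delta>0$ when $M=0$, i.e. $A=\{0\}$) forces $d_H(tA,t_0A)<\varepsilon$ for all $t$ with $|t-t_0|<\delta$. Since $t_0\in\R$ was arbitrary, $F$ is continuous on all of $\R$, as claimed.

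The step I expect to be the main obstacle is precisely the passage to the supremum over $a\in A$, namely the finiteness of $M$. This is where some boundedness of $A$ is indispensable, and it is most visible at $t_0=0$: there $F(0)=0\cdot A=\{0\}$ collapses to a singleton and $d_H(tA,\{0\})=|t|\,\sup_{a\in A}\|a\|$, which is infinite for every $t\neq0$ once $A$ is unbounded. Thus the statement is best read with $A$ bounded (as it is in the $cc(Y)$ setting where the lemma is subsequently applied); under that hypothesis the estimate above holds uniformly and the proof goes through verbatim, whereas for a genuinely unbounded $A$ one checks that the lower one-sided condition still holds (any $a_0$ with $t_0a_0\in V$ gives $ta_0\in V$ for $t$ near $t_0$), so that in general only lower semicontinuity can be guaranteed and Hausdorff continuity must be replaced by the appropriate semicontinuity formulation.
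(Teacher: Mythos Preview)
The paper states this lemma without proof, so there is no argument to compare against. Your Hausdorff-distance computation is the standard one and is correct: the pointwise identity $\|ta-t_0a\|=|t-t_0|\,\|a\|$ immediately yields the Lipschitz bound $d_H(tA,t_0A)\le M\,|t-t_0|$ with $M=\sup_{a\in A}\|a\|$, and continuity follows.

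You are also right to flag boundedness of $A$ as the essential hypothesis; the lemma as stated (``for each set $A\subset Y$'') is too strong, and your counterexample at $t_0=0$ shows exactly why. This matches how the paper actually invokes the lemma in the subsequent theorem, where it is applied only to a bounded set $A$. So your proof, read with the boundedness proviso you spell out, is a complete and appropriate justification of the lemma in the form the paper needs.
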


\begin{theorem}
	Let $D$ be a harmonically $m$-convex subset. If $ F: D \rightarrow bcl(Y) $ is a strongly harmonically $m$-midconcave set-valued function modulus $c$ and upper semicontinuous on $D$, then $F$ is strongly harmonically $m$-concave modulus $c$.
\end{theorem}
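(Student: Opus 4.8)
The plan is to reduce to the already-settled case $m=1$ and then lift the conclusion back to arbitrary $m$, exactly as in the Kuhn-type proof above. The linchpin is that the hypothesis is imposed for all $m\in(0,1]$, so it is in particular available at $m=1$, where the strong harmonic $m$-midconcavity inclusion
\[
F\!\left(\frac{2mxy}{mx+y}\right)+\frac{cm}{4}\left\|\frac{x-y}{xy}\right\|^{2}\overline{B}\subset\frac{F(y)+mF(x)}{2}
\]
collapses to the ordinary strong harmonic midconcavity appearing in the definition that precedes Theorem~\ref{bernstein}.

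First I would specialize the hypothesis to $m=1$, so that $F\colon D\to bcl(Y)$ is strongly harmonically midconcave modulus $c$ and upper semicontinuous on $D$. Second, I would apply Theorem~\ref{bernstein} \emph{verbatim}: these are precisely its hypotheses, and it returns that $F$ is strongly harmonically concave modulus $c$, i.e.
\[
F\!\left(\frac{xy}{tx+(1-t)y}\right)+ct(1-t)\left\|\frac{x-y}{xy}\right\|^{2}\overline{B}\subset tF(y)+(1-t)F(x)
\]
for every $t\in[0,1]$; in particular $F$ is strongly harmonically $t$-concave modulus $c$ for each fixed $t\in(0,1)$. Third, I would feed this into Proposition~\ref{t-to-m} (whose starshapedness hypothesis on $D$, if not assumed outright, is furnished by Proposition~\ref{tD}); for every $t\in(0,1)$ and every $m\in(0,1]$ it outputs the three-term chain ending in $tF(y)+m(1-t)F(x)$, and its outer terms are exactly the assertion that $F$ is strongly harmonically $m$-concave modulus $c$. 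This closes the argument.

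The delicate point in this route is only the first reduction: one must check that the midpoint hypothesis is genuinely available at $m=1$, that Theorem~\ref{bernstein} was established for maps into $bcl(Y)$ under upper semicontinuity, and that the domain hypothesis is met because harmonic $m$-convexity of $D$ already contains its $m=1$ instance, i.e.\ harmonic convexity; all of these hold. If one instead wants a self-contained proof that never specializes $m$, the natural substitute is the direct Bernstein--Doetsch scheme powered by the two lemmas stated just above. Writing $z_s=\frac{mxy}{smx+(1-s)y}$, $r_s=cm\,s(1-s)\left\|\frac{x-y}{xy}\right\|^{2}$ and $R_s=sF(y)+m(1-s)F(x)$, Lemma~\ref{Diadical} already gives $F(z_s)+r_s\overline{B}\subset R_s$ at every dyadic $s=k/2^{n}\in[0,1]$, while Lemma~\ref{lema2} makes $s\mapsto sF(y)$ and $s\mapsto m(1-s)F(x)$, hence $s\mapsto R_s$, continuous in the Hausdorff metric; one then fixes $t\in(0,1)$, chooses dyadic $t_j\to t$, and passes to the limit in the inclusions $F(z_{t_j})+r_{t_j}\overline{B}\subset R_{t_j}$. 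Here lies the genuine obstacle: inclusions do not survive limits as freely as scalar inequalities, so one must align the direction of upper semicontinuity with the target inclusion $F(z_t)+r_t\overline{B}\subset R_t$ and then use the closedness of the values together with R{\aa}dstr\"{o}m's cancellation law (Lemma~\ref{raa}) to cancel the common ball $r_t\overline{B}$ and certify that the limiting point truly lands in $tF(y)+m(1-t)F(x)$.
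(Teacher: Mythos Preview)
Your primary reduction route is sound, but it is not the one the paper takes. The paper executes directly what you outline as the alternative: it fixes $t\in(0,1)$, chooses dyadic $q_n\to t$, and combines Lemma~\ref{lema2} with upper semicontinuity at $\gamma=\frac{mxy}{tmx+(1-t)y}$ to produce, for every $\epsilon>0$ and large $n$,
\[
F(z_t)+r_t\overline{B}\;\subset\;F(z_{q_n})+r_{q_n}\overline{B}+2\epsilon\overline{B}\;\subset\;R_{q_n}+2\epsilon\overline{B}\;\subset\;R_t+4\epsilon\overline{B},
\]
with Lemma~\ref{Diadical} supplying the middle inclusion; intersecting over $\epsilon>0$ and using that $R_t=tF(y)+m(1-t)F(x)$ is closed finishes. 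So the ``genuine obstacle'' you flag is handled purely by the $\epsilon$-slack device, with no separate appeal to R{\aa}dstr\"{o}m at this stage. Your primary route is tidier because it delegates the whole limiting argument to Theorem~\ref{bernstein} at $m=1$ and lifts only once, at the end, via Proposition~\ref{t-to-m}; the paper instead lifts first (inside the proof of Lemma~\ref{Diadical}, again via Proposition~\ref{t-to-m}) and then redoes the Bernstein--Doetsch $\epsilon$-argument at the $m$-level. The two routes rest on the same pair of ingredients, applied in the opposite order.
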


\begin{proof}
Let $x, y \in D$ and $t \in (0,1)$. By the density of the diadycal numbers in $ [0,1] $, we can consider a succession of numerical numbers $\{q_{n}\}\subset (0,1)$ such that $ q_{n} \rightarrow t $. \\
Fixed $\epsilon> 0 $, exist $ n_{1} \in \N $ such that if $n \geq n_{1}$ then $|q_{n}-t| <\epsilon $. Let $ A\subset Y$ a bounded subset, then by the Lemma \ref{lema2} it is necessary that the set-valued function $ F: s \in \R \rightarrow sA$ is continuous, so if for all $ F(\gamma) \subset A$ with $ \gamma = \frac{mxy} {tmx + (1-t)y} \in D $, we have the following

\begin{equation}\label{1}
	q_{n}F(y)\subset tF(y)+\epsilon\overline{B}
\end{equation}
\begin{equation}\label{2}
	m(1-q_{n})F(x)\subset m(1-t)F(x)+\epsilon\overline{B}
\end{equation}
and
\begin{equation}\label{3}
	cmt(1-t)\left\|\frac{x-y}{xy}\right\|^{2}\overline{B}\subset cmq_{n}(1-q_{n})\left\|\frac{x-y}{xy}\right\|^{2}\overline{B}+\epsilon \overline{B}
\end{equation}
For all $n\geq n_{1}$. Now, given the semicontinuity above $F$ at the point $\gamma=\frac{mxy}{tmx+(1-t)y} \in D$ we have

\begin{equation}\label{4}
	F\left(\frac{mxy}{tmx+(1-t) y}\right)\subset F\left(\frac{mxy}{q_{n}mx+(1-q_{n}) y}\right)+ \epsilon\overline{B}
\end{equation}
For all $n\geq n_{2}$. Then, using (\ref{1}), (\ref{2}), (\ref{3}), (\ref{4}) and the Lemma \ref{Diadical} we have that

\begin{eqnarray*}
	F\left(\frac{mxy}{tmx+(1-t) y}\right)&+& cmt(1-t)\left\|\frac{x-y}{xy}\right\|^{2}\overline{B}\\
	&\subset& F\left(\frac{mxy}{q_{n}mx+(1-q_{n}) y}\right)+ cmq_{n}(1-q_{n})\left\|\frac{x-y}{xy}\right\|^{2}\overline{B}+2\epsilon \overline{B} \\
	&\subset& q_{n}F(y)+ m(1-q_{n})F(x)+2\epsilon \overline{B} \\
	&\subset& tF(y)+\epsilon\overline{B}+ m(1-t)F(x)+\epsilon\overline{B}+ 2\epsilon \overline{B}\\
	&=& tF(y)+m(1-t)F(x)+ 4\epsilon \overline{B}
\end{eqnarray*}

For all $n\geq \max\{n_{1},n_{2}\}$. Since the previous inclusions are kept for every $ \epsilon> 0 $, we have then
\begin{eqnarray*}
   F\left(\frac{mxy}{tmx+(1-t) y}\right)&+& cmt(1-t)\left\|\frac{x-y}{xy}\right\|^{2}\overline{B}\\
   &\subset& \bigcap_{\epsilon>0} tF(y)+m(1-t)F(x)+ 4\epsilon \overline{B}\\
   &=& (\overline{tF(y)+m(1-t)F(x)})\\
   &=& tF(y)+m(1-t)F(x)
\end{eqnarray*}

Thus, $F$ is a strongly harmonically $m$-concave set-valued function modulus $c$.
 
\end{proof}

 \end{document}